\documentclass[11pt,a4paper]{article}
\usepackage[latin1]{inputenc}   % pour les accents

\setlength{\textwidth}{16cm}
\setlength{\textheight}{23cm}
\setlength{\topmargin}{-1cm}
\setlength{\oddsidemargin}{-1mm}
\setlength{\evensidemargin}{-1mm}
\raggedbottom

\usepackage{amsfonts,amsmath,layout}
\usepackage{mathrsfs}  
\usepackage{amssymb,mathabx,amsfonts,amsthm,amscd,stmaryrd,dsfont,esint,upgreek,constants,todonotes}

\usepackage{makeidx}
\usepackage[T1]{fontenc}
\usepackage{amsmath}
\usepackage{amsthm}
\usepackage{amsfonts}
\usepackage{amssymb}
\usepackage{alltt}
\usepackage{color}
\usepackage{graphicx}
\usepackage{ulem}

%\usepackage{graphicx} 
%\usepackage{url}
%\usepackage{hyperref}   

%\usepackage[active]{Srcltx} %SRC Specials for DVI search

%       les entiers naturels
\newcommand{\N}{\mathbb N}

%       les entiers relatifs
\newcommand{\Z}{\mathbb Z}

%       les nombres rationnels

%       les nombres reels
\newcommand{\R}{\mathbb R}

%       les nombres complexes
%\newcommand{\C}{\mathbb C}

%\def\QED{\begin{flushright}$\Box$\end{flushright}}

\def\P{\mathbb P}
\def\F{\mathbb F}

\def\Pw{{\mathcal P}(\T^d)}
\def\Pk{{\mathcal P}(\T^d)}
\def\dk{{\bf d}_1}

\newcommand{\be}{\begin{equation}}
\newcommand{\ee}{\end{equation}}
\def\1{{\bf 1}}

\def\ep{\epsilon}

\def\Dt0{{\bf D}(t_0)}

\def\E{{\bf E}}

\def\P{{\bf P}}

\def\to{\rightarrow}

\def\ds{\displaystyle}

%\def\Pw{{\mathcal P}_2}
%\def\dw{{\bf d}_2}

%A AJOUTER

%\def\qed{ \ \vrule width.2cm height.2cm depth0cm\smallskip}
\def\ds{\displaystyle}

\def \to{\rightarrow}

\def \T{\mathbb{T}}
\def \R {\mathbb{R}}
\def \N {\mathbb{N}}
\def \Z {\mathbb{Z}}

\def\dk{{\bf d}_1}
\def\dive{{\rm div}}

\def \ep{\varepsilon}

\def\E{\mathbb E}
\def\P{\mathbb P}
\def\inte{\int_{\T^d}}
\definecolor{ProcessBlue}{cmyk}{1,0,0,0.40}

%%%%%%%%%%%%%%%%%%%%%%%%%%%%%%%%%%
\newtheorem{Theorem}{Theorem}[section]
\newtheorem{Definition}[Theorem]{Definition}
\newtheorem{Proposition}[Theorem]{Proposition}
\newtheorem{Lemma}[Theorem]{Lemma}

\title{An example of multiple mean field limits in ergodic differential games}
\author{Pierre Cardaliaguet\thanks{Universit\'{e} Paris-Dauphine, PSL Research University, Ceremade. cardaliaguet@ceremade.dauphine.fr} \and Catherine Rainer\thanks{Universit\'{e} de Bretagne Occidentale, LMBA. Catherine.Rainer@univ-brest.fr
}}

\begin{document}

\maketitle

\begin{abstract} We present an example of symmetric ergodic $N-$players differential games, played in memory strategies  on the position of the players, for which the limit set, as $N\to +\infty$, of Nash equilibrium payoffs is large, although the game has a single mean field game equilibrium. This example is in sharp contrast with a result by Lacker \cite{La18} for finite horizon problems. 
\end{abstract}

%\tableofcontents

\section*{Introduction}

In this note we want to underline the role of information in mean field games. For this we study the limit of Nash equilibrium payoffs in ergodic $N-$player stochastic differential games as the number $N$ of players tends to infinity. Since the pioneering works by Lasry and Lions \cite{LL07mf} (see also \cite{HMC}) differential games with many agents have attracted a lot of attention under the terminology of mean field games. We also refer the reader to the monographs \cite{bensoussan2013mean, CDbook}. Mean field games are nonatomic  dynamic games, in which the agents interact through the population density.  

Here we investigate in what extend the mean field game problem is the limit of the $N-$person differential games. This question is surprisingly difficult in general and is not completely understood so far in full generality. When, in the $N-$player game, players play in open-loop (i.e., observe only their own position but not the position of the other players), the mean field limit is a mean field game. The first result in that direction goes back to \cite{LL07mf} in the ergodic setting (see also \cite{ABC, Fe} for statements in the same direction); extensions to the non Markovian setting can be found in Fischer \cite{Fi} while Lacker gave a complete characterization of the limit \cite{La16} (see also \cite{Nu} for an exit time problem). Note that these (often technically difficult) results are not entirely surprising since, in the N-player game as well as in the mean field game, the players do not observe the position of the other players:  therefore there is no real change of nature between the $N-$player problem and the mean field game. 

We are interested here in the $N-$player games in which players   observe each other,  the so-called closed-loop regime. In this setting, the mean field limit is much less understood and one possesses only partial results. In general, one formalizes the closed-loop Nash equilibria in the $N-$person game by a PDE (the Nash system) which describes the fact that players react in function of the current position of all the other players. The first convergence result in this setting \cite{CDLL} states that, in the finite horizon problem and under a suitable monotonicity assumption, the solution of the Nash system converges to a MFG equilibrium. The convergence relies on the construction of a solution to the so-called master equation, a partial differential equation stated in the space of probability measures. The result was later applied and extended to different frameworks, with similar---or closely related---techniques of proof in \cite{BaCo, Ca17, CePe, CDPFP, DLR, DLR2}. 

Recently Lacker \cite{La18} introduced completely different arguments to handle the problem. He proved the convergence of the closed-loop Nash equilibria to ``extended'' MFG equilibria. Even more surprisingly, his result extends to generalized Markov strategies, where players can remember the past positions of the other players (memory strategies). The key remark is that, for a large number of players and ``in average'', the fact that a single player deviates does not change too much the (time dependent) distribution of the players. Note that \cite{La18} holds in a set-up in which the noise of each player is non degenerate. 

It is important to point out that the result of \cite{La18} cannot be extended to strategies in which players observe the controls of the other players. Indeed, the so-called Folk Theorem \cite{BCR, Ko76} states that players can detect and ``punish''  a player who deviates and therefore, even when the number of players is large, the behavior of a single player completely changes the outcome of the game. A way to understand \cite{La18} is  that, because of the (nondegenerate) noise, the observation of a player's position does not give information on the fact that this player has deviated or not.

The aim of the present paper is to address a similar question for (a particular class of) ergodic differential games. Let us first recall that, in the open-loop regime, limits of Nash equilibria in the $N-$player game are MFG equilibria \cite{ABC, Fe, LL07mf}. On the other hand, in the closed loop Markovian regime, the convergence problem is surprisingly open up to now, although the existence of a solution to the ergodic master equation is known \cite{CaPo}: Indeed in this ergodic set-up, the use of the solution to the master equation is not obvious and the technique of proof of \cite{CDLL} does not seem to apply. Here we concentrate on the limit of equilibria in $N-$player differential games with generalized Markov strategies. While,  for the finite horizon problems, these Nash equilibria always converge to MFG equilibria \cite{La18}, we show  that this is no longer the case in the ergodic regime. 

This means that, when the horizon becomes infinite, players can learn from the other players even if they observe their positions only. Our convergence result is reminiscent of the Folk Theorem of \cite{BCR}, but in a framework of an ergodic cost and in which players observe only the positions of the other players. 

In order to explain more precisely our result, let us describe the framework in which we work. We consider $N-$player differential games played in strategies depending on the past positions of all the players (See Subsection \ref{subsec.Nash} below). Player $i$ (where $i\in \{1, \dots, N\}$)  minimizes an ergodic and symmetric cost of the form 
$$
\limsup_{T\to+\infty} \frac{1}{T} \E\left[ \int_0^T (L(\alpha^i_t, X^i_t)+ F( m^{N,i}_{{\bf X}_t}))dt\right]
$$
where, for any $j\in \{1, \dots, N\}$, $X^j$ is the position of player $j$ at time $t$, $\alpha^j$ is the control of player $j$,  ${\bf X}=(X^1, \dots, X^N)$, 
$$
m^{N,i}_{{\bf X}_t}:= \frac{1}{N-1}\sum_{j\neq i} \delta_{X^j_t}
$$
is the empirical measure of all players but player $i$ and the dynamic of $X^j$ is just 
$$
dX^j_t=\alpha^j_t dt +dB^j_t,
$$
where the $B^j$ are independent $d-$dimensional Brownian motion. Moreover, $F:\Pk\to \R$ is a sufficiently smooth map (where $\Pw$ is the set of Borel probability measures on $\T^d$). Note that we work here with periodic data (and thus in the $d-$dimensional torus $\T^d$). 

In this setting the mean field game payoff is unique and given by $e_{MFG}:=- \lambda_0+ F( \mu_0)$, where $\mu_0$ is the unique invariant measure solution to the equation 
$$
-\frac12 \Delta \mu_0 -\dive (\mu_0 H_p(Du_0(x),x))=0 
$$
and the pair $(u_0, \lambda_0)$ is the unique solution to the ergodic Hamilton-Jacobi equation: 
$$
-\frac12 \Delta u_0+ H(Du_0,x)= \lambda_0 \qquad {\rm in }\; \T^d
$$
where 
$$
H(p,x)=\sup_{a\in \R^d} -a\cdot p -L(a,x)
$$
(see \cite{CaPo}).\\
On the other hand, the ``social cost'' (i.e., the smallest cost a global planner can achieve, see \cite{CaRa}) is given by 
$$
e_{\min} :=  \inf_{\mu,\alpha} \int_{\T^d} L(\alpha(x),x)\mu(dx) +F(\mu), 
$$
where the infimum is taken over all pair $(\mu,\alpha)$ where $\mu$ is the invariant measure on $\T^d$ associated with the distributed control $\alpha:\T^d\to \R^d$, i.e., satisfying the equation 
$$
-\frac12 \Delta \mu+ \dive(\mu \alpha)= 0 \; {\rm in }\; \R^d. 
$$
Our result states that, for any 
\be\label{eq.minaresultintro}
e\in [ e_{\min}, -\lambda_0 +\max F),
\ee
there exists a symmetric Nash equilibrium payoff $(e^N, \dots, e^N)$ in the $N-$player game such that $e^N$ converges to $e$. Unless $F$ is constant, the interval in \eqref{eq.minaresultintro} has a non empty interior:
$$
e_{min} < e_{MFG} \leq -\lambda_0 +\max F
$$ 
(see \cite{CaRa}). So the limit of the $N-$player game contains many more Nash equilibrium payoffs than the MFG one, including the social cost. \\

Let us underline again that our result says nothing on the convergence, as $N\to +\infty$, of the solution $(v^{N,i})$ of the $N-$player Nash system 
$$
- \frac{1}{2}\sum_{j=1}^N \Delta_{x_j} v^{N,i}({\bf x}) +H(D_{x_i}v^{N,i}({\bf x}), x_i) +\sum_{j\neq i} D_{x_j}v^{N,i}({\bf x}) \cdot H_p(D_{x_j}v^{N,j}({\bf x}),x_j)= F(m^{N,i}_{\bf x})+\lambda^{N,i} 
$$
for $i=1, \dots, N$ and ${\bf x}=(x_1, \dots, x_N)\in (\T^d)^N$. This convergence is, so far, an open problem. \\

The paper is organized as follows: in the first section, we state our assumptions and introduce the main definitions (Nash equilibria in generalized Markov strategies, mean field game equilibria, social cost). The second section is dedicated to the statement and the proof of the existence of several mean field limits. \\

{\bf Acknowledgement:} The authors were partially supported by the ANR (Agence Nationale de la Recherche) project ANR-16-CE40-0015-01. The first author was partially supported by the Office for Naval Research Grant N00014-17-1-2095.

\section{Notation, assumption and basic definitions} 

\subsection{Notation and assumptions}

Our game takes place in $\R^d$. However our data are $\Z^d-$periodic  in space, which means that we mostly work in the $d-$dimensional torus $\T^d=\R^d/\Z^d$ and we denote by $\pi:\R^d\to \T^d$ the natural projection. Given a Borel probability measure $m$ on $\R^d$, we often project it into the set ${\mathcal P}(\T^d)$ of Borel probability measures on $\T^d$ by considering $\pi\sharp m$ defined by 
$$
\int_{\T^d} f(x) \pi\sharp m(dx):= \int_{\R^d} f(\pi(x))m(dx) \qquad \forall f\in C^0(\T^d). 
$$

Our problem  involves the following data: The Lagrangian $L:\R^d\times \T^d\to \R$ satisfies, for some constant $C_0>0$:
\be\label{HypL}
\mbox{\rm $L=L(\alpha, x)$ is of class $C^2$, with $C_0^{-1} I_d \leq D^2_{\alpha,\alpha} L(\alpha,x)\leq C_0 I_d $.} 
\ee
It will often be convenient to extend $L$ to $\R^d\times \R^d$ by setting $L(\alpha,x)= L(\alpha, \pi(x))$. The map $F: \Pk\to \R$ satisfies 
\be\label{HypF}
\mbox{\rm $F$ is of class $C^1$ with $y\to \frac{\delta F}{\delta m}(m,y)$ bounded in $C^2(\T^d)$ uniformly in $m$.}
\ee
Let us recall that for $F$ to be of class $C^1$ means that there exists a continuous map $\frac{\delta F}{\delta m}:\Pk\times \T^d \to \R$ such that 
$$
F(m')-F(m)= \int_0^1 \inte \frac{\delta F}{\delta m}((1-s)m+sm',y)(m'-m)(dy)ds \qquad \forall m,m'\in \Pk. 
$$
In particular, $F$ is continuous in $m$. It will be convenient to assume that 
\be \label{HypF2}
\mbox{\rm $F$ is not constant on $\Pk$.}
\ee
Throughout the paper and for any $N\in \N$, the initial condition $(x^1_0, \dots, x^N_0)\in (\T^d)^N$ is fixed (and actually irrelevant). 

%%%%%%%%%%%%%%%%%
\subsection{Nash equilibria in generalized Markov strategies} \label{subsec.Nash}

In the  $N-$player game, players play  nonanticipative strategies on the trajectory of the other players: namely, a strategy of player $i\in \{1, \dots, N\}$ is a bounded map $\alpha^i: \R_+ \times (C^0(\R_+, \R^d))^{N}\to \R^d$ which is Borel measurable and such that, for any $(x_j)_{j=1, \dots, N}$ and $(y_j)_{j=1, \dots, N}$ in $(C^0(\R_+, \R^d))^{N}$ which coincide on $[0, t]$, we have 
$$
\alpha^i((x_j)_{j=1, \dots, N})_s= \alpha^i((y_j)_{j=1, \dots, N})_s\qquad \mbox{\rm a.e. on } [0, t].
$$
%{\blu{\it Pouvons-nous imposer aux $\alpha$ d'\^etre born\'es ou satisfaisant la condition de Novikov ?}\\
These strategies are called ``generalized Markov strategies''. We denote by ${\mathcal A}$ the set of generalized Markov strategies for a player (note that it does not depend on $i$ since all the players are symmetric). \\
For given $(\alpha^1, \dots, \alpha^N ) \in {\mathcal A}^N$ and an initial condition ${\bf x_0}= (x^1_0,\dots x^N_0)\in (\R^d)^N$, let us consider the SDE
\be\label{eq.motion}
\left\{\begin{array}{l}
\ds dX^i_s= \alpha^i(X^1_\cdot, \dots, X^N_\cdot)ds + dB^i_s , \qquad s\geq 0,\; i\in \{1, \dots, N\},\\ 
\ds X^i_0= x^i_0 \qquad i\in \{1, \dots, N\}.
\end{array}\right.
\ee
Using Girsanov's theorem, we can find a filtered probability space $(\Omega,{\cal F},\F=({\cal F}_t)_{t\geq 0},\P)$ and, on this space, a couple of processes $({\bf X}_t,B_t)_{t\geq 0}$ with values in $\R^{d\times N}\times\R^{d\times N}$, such that ${\bf X}=(X^1, \dots, X^N)$ is adapted to $\F$, $B$ is a $\F$-Brownian motion and equation \eqref{eq.motion} is satisfied for all $i$. Moreover the couple $({\bf X},B)$ is unique in law. In what follows, as in Lacker \cite{La18}, we keep in mind that for each $N$-uple of strategies, the process ${\bf X}$ is defined on a different probability space, but, for the comfort of the reader, we don't mention this dependence in the notations.\\
In this game, we assume that the payoff of players takes the form of an  ergodic cost of mean field type. Given a family of strategies ${\bf \alpha}=(\alpha^1, \dots, \alpha^N ) \in {\mathcal A}^N$, the cost  $J^i$ of player  $i\in \{1, \dots, N\}$ is 
$$
J^i({\bf \alpha})= \limsup_{T\to+\infty} \frac{1}{T} \E\left[ \int_0^T (L(\alpha^i_t, X^i_t) + F(\pi\sharp m^{N,i}_{{\bf X}_t}) ) dt\right]
$$
where ${\bf X}=(X^1, \dots, X^N)$ is the solution to \eqref{eq.motion}, $m^{N,i}_{{\bf X}_t}$ is the empirical measure of all players but player $i$:
\be\label{defem}
m^{N,i}_{{\bf X}_t}:= \frac{1}{N-1}\sum_{j\neq i} \delta_{X^j_t}.
\ee

In this setting, the definition of a symmetric Nash equilibrium payoff is the following: 
\begin{Definition}\label{def.Nash} Fix a symmetric initial position ${\bf x_0}= (x_0,\dots x_0)\in (\R^d)^N$. We say that $e\in \R$ is a symmetric Nash equilibrium payoff (in generalized Markov strategies) if, for any $\ep>0$, there exists $\bar \alpha^1\in {\mathcal A}$ a strategy for player $1$, which is symmetric with respect to the other players: 
$$
\bar \alpha^1 (x^1, x^2,  \dots, x^N)= \bar \alpha^1 (x^1, x^{\sigma(2)},  \dots, x^{\sigma(N)})\qquad \forall x^1, \dots, x^N\in C^0(\R_+, \R^d), 
$$
for any permutation $\sigma$ on $\{2, \dots, N\}$ and such that, if we define the strategy of player $i$ by 
$$
\bar \alpha^i(x^1, \dots, x^N):= \bar \alpha^1(x^i, x^2, \dots, x^{i-1}, x^1, x^{i+1}, \dots, x^N), 
$$
then  ${\bf \bar \alpha}:= (\bar \alpha^1, \dots, \bar \alpha^N)$ is an $\ep-$Nash equilibrium with payoff close to $e$: for any $i\in \{1, \dots, N\}$, 
$$
J^i({\bf \bar \alpha}) \leq J^i(\alpha^i, (\bar \alpha^j)_{j\neq i} )+\ep \qquad \forall \alpha^i\in {\mathcal A}
$$
and 
$$
\left| J^i({\bf \bar \alpha})-e\right| \leq \ep. 
$$
\end{Definition}

%%%%%%%%%%%%%%%%%%%%
\subsection{The ergodic MFG equilibrium}

As the number $N$ of players tends to infinity, one often expects that the limit of an $N-$player differential game becomes a mean field game. As our game is of ergodic type, the MFG equilibrium takes  the form of the following ergodic MFG, in which the unknown are $(\lambda, u,\mu)$:
\be\label{eq.MFGergo}
\left\{\begin{array}{l}
\ds -\frac12 \Delta u+ H(Du,x)= F(\mu)+\lambda \qquad {\rm in }\; \T^d,\\
\\
\ds -\frac12 \Delta \mu-{\rm div} (\mu H_p(Du,x))=0\qquad {\rm in }\; \T^d, \\
\ds \mu\geq 0, \; \inte\mu=1. 
\end{array}\right.
\ee

It turns out that, for our problem, the unique MFG equilibrium has a very simple structure:

\begin{Proposition} The unique MFG equilibrium is given by $(\lambda_0-F(\mu_0),u_0,\mu_0)$ where  $(\lambda_0,u_0)$ solve the ergodic problem 
$$
-\frac12 \Delta u_0+ H(Du_0,x)= \lambda_0 \qquad {\rm in }\; \T^d
$$
($u_0$ is unique up to constants) and $\mu_0$ is the unique probability measure such that 
$$
-\frac12 \Delta \mu_0-{\rm div} (\mu_0 H_p(Du_0,x))=0 \qquad {\rm in }\; \T^d. 
$$
\end{Proposition}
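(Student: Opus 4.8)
The plan is to exploit the fact that, although the MFG system \eqref{eq.MFGergo} looks fully coupled, the coupling is in fact extremely weak: the measure $\mu$ enters the Hamilton--Jacobi equation only through the \emph{scalar} $F(\mu)\in\R$. I would therefore argue by decoupling. Let $(\lambda,u,\mu)$ be any solution of \eqref{eq.MFGergo}, and set $\lambda':=\lambda+F(\mu)$. Then the first equation reads
$$
-\tfrac12 \Delta u + H(Du,x)=\lambda'\qquad \text{in } \T^d,
$$
which is exactly the ergodic (cell) problem. Under \eqref{HypL} the Hamiltonian $H(p,x)=\sup_a(-a\cdot p-L(a,x))$ is smooth and convex in $p$, and the equation is uniformly elliptic; hence, by the classical theory of the ergodic problem recalled in \cite{CaPo}, the constant $\lambda'$ is uniquely determined and $u$ is unique up to an additive constant. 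This forces $\lambda'=\lambda_0$, i.e. $\lambda=\lambda_0-F(\mu)$, and $u=u_0+c$ for some $c\in\R$.

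The next step is to feed this rigidity into the Fokker--Planck equation. Since $u$ differs from $u_0$ only by a constant, $Du=Du_0$ and therefore $H_p(Du,x)=H_p(Du_0,x)$ pointwise. The second equation of \eqref{eq.MFGergo} then becomes
$$
-\tfrac12 \Delta \mu -\dive\bigl(\mu\, H_p(Du_0,x)\bigr)=0 \quad \text{in } \T^d,\qquad \mu\geq 0,\ \inte\mu=1,
$$
which is precisely the equation characterizing $\mu_0$. By uniqueness of the normalized invariant measure of this uniformly elliptic Kolmogorov operator with smooth drift, $\mu=\mu_0$. Substituting back gives $F(\mu)=F(\mu_0)$ and hence $\lambda=\lambda_0-F(\mu_0)$, so every solution coincides, up to the harmless additive constant in $u$, with $(\lambda_0-F(\mu_0),u_0,\mu_0)$. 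Conversely, one checks directly that this triple solves \eqref{eq.MFGergo}, which yields existence.

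I do not expect a genuine obstacle here: the argument is essentially bookkeeping of the uniqueness statements for the two scalar/linear subproblems. The only point requiring care is to pin down precisely which results are invoked, namely uniqueness of the additive eigenvalue $\lambda_0$ and of $u_0$ up to constants for the ergodic Hamilton--Jacobi equation, and uniqueness of the normalized invariant density for a uniformly elliptic Fokker--Planck operator with smooth drift. Both are standard under \eqref{HypL}; the regularity of $u_0$, and hence of the drift $H_p(Du_0,\cdot)$, needed to make the Fokker--Planck uniqueness rigorous is supplied by Schauder estimates for the ergodic equation.
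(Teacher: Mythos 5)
Your proof is correct, but it takes a genuinely different route from the paper's. The paper's proof is a one-liner: it checks that $(\lambda_0-F(\mu_0),u_0,\mu_0)$ solves \eqref{eq.MFGergo} and then invokes uniqueness via the standard Lasry--Lions monotonicity argument, noting that $F$, being independent of $x$, trivially satisfies the monotonicity condition $\int_{\T^d}(F(m)-F(m'))\,d(m-m')=0$ (the strict convexity of $H$ coming from \eqref{HypL} then closes the usual duality computation). You instead decouple the system directly: since $F(\mu)$ enters the Hamilton--Jacobi equation only as an additive constant, you absorb it into the ergodic constant, use uniqueness of the additive eigenvalue and of $u$ up to constants to conclude $Du=Du_0$, and then use uniqueness of the normalized invariant measure of the resulting nondegenerate Kolmogorov operator to get $\mu=\mu_0$. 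Your argument is more elementary and self-contained --- it avoids the monotonicity machinery entirely and makes transparent exactly why the equilibrium is insensitive to $F$ --- at the price of explicitly invoking two uniqueness statements (ergodic HJ and Fokker--Planck) that the paper's citation to \cite{LL07mf} bundles into one. Both uniqueness facts are indeed standard here: the strong maximum principle on the torus gives uniqueness of $u$ up to constants for the uniformly elliptic ergodic equation, and the smoothness of the drift $H_p(Du_0,\cdot)$ (from Schauder theory, as you note) gives uniqueness of the invariant density. No gap.
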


In this case, the payoff of the MFG equilibrium is given by 
$$
e_{MFG}:= -\lambda_0+F(\mu_0).
$$ 

\begin{proof} The proof is immediate, since,  $(\lambda, u, \mu):= (\lambda_0-F(\mu_0),u_0,\mu_0)$ satisfies the MFG system \eqref{eq.MFGergo} and the solution of this system is unique because $F$, being independent of $x$,  satisfies the standard Lasry-Lions monotonicity condition (see \cite{LL07mf}). 
\end{proof}

Let us recall for later use that the measure $\nu_0:= (Id,-H_p(Du_0))\sharp \mu_0$ minimizes the energy 
\be\label{eq.characlambda0}
-\lambda_0 = \int_{\R^d\times \T^d}  L(a,x) \nu_0(da,dx)= \inf_\nu \int_{\R^d\times \T^d} L(a,x) \nu(da,dx) 
\ee
where the infimum in the last term is computed among all the Borel probability measures on $\R^d\times \T^d$ which are closed (see \cite{GV}): 
\be\label{closedcond}
\int_{\R^d\times \T^d} (a\cdot D\phi + \frac12\Delta \phi) \nu(da,dx) = 0\qquad  \forall \phi\in {\mathcal C}^\infty(\T^d). 
\ee

%%%%%%%%
\subsection{The social cost} \label{subsec.soccost}

A last notion of interest in our problem is the social cost, or the cost for a global planner. It takes the form 
\be\label{defemin}
e_{\min}:= \inf_{(m,\alpha)} \int_{\R^d} L(\alpha(x),x) m(dx) + F(m),
\ee
where the infimum is computed among the pairs $(m,\alpha)$, where $m\in \Pk$ and $\alpha\in L^2_m(\T^d, \R^d)$ satisfy the constraint
\be\label{continuityeq}
-\frac{1}{2}\Delta m +\dive(m\alpha)= 0 \qquad {\rm in} \; \T^d. 
\ee
We will often use the fact that, if $(m,\alpha)$ is as above, then the measure $\nu:= (Id, \alpha)\sharp m$ satisfies \eqref{closedcond} and therefore 
$$
\inte L(\alpha(x),x)m(dx) \geq - \lambda_0. 
$$
It is known (see \cite{GV}) that, under our continuity and growth assumption on $L$ and $F$ in \eqref{HypL} and \eqref{HypF}, the problem has at least one solution $(\tilde m, \tilde \alpha)$ and that, under the differentiability assumption \eqref{HypF} on $F$, there exists $\tilde u:\T^d\to \R$ of class $C^2$ such that the pair $(\tilde u, \tilde m)$ satisfies the (new) MFG system 
$$
\left\{\begin{array}{l} 
-\frac{1}{2}\Delta \tilde u +H(D\tilde u,x)=\frac{\delta F}{\delta m}(\tilde m,x) \qquad {\rm in }\; \T^d\\
\\
-\frac{1}{2}\Delta \tilde m -\dive (\tilde m H_p(D\tilde u,x))=0 \qquad {\rm in }\; \T^d\
\end{array}\right.
$$
with $\tilde \alpha(x)= -H_p(D\tilde u(x),x)$. In view of the regularity of $\tilde u$, $\tilde m$ is at least of class $C^2$ and is positive.

In a previous paper \cite{CaRa} (in a more general set-up than here, for time dependent MFGs), we have proved that there is no equality between the MFG and the social cost, unless $F$ is constant: More precisely, if $F$ satisfies \eqref{HypF2}, then 
\be \label{condcond}
e_{\min}< e_{MFG} \leq e_{\max}:= - \lambda_0+\max_{m\in \Pk} F(m). 
\ee

%%%%%%%%%%%%%%%%%%%%%%%%%%%%%%%%%%%%
\section{The convergence result}

We explain here that, for our problem, one cannot expect the convergence of all the Nash equilibria in generalized Markov strategies to the MFG equilibrium:  

\subsection{The main theorem} 

\begin{Theorem}\label{thm:main} Under the assumptions \eqref{HypL}, \eqref{HypF} and \eqref{HypF2} on $L$ and $F$, and for any 
$$
e\in [e_{\min}, e_{\max}), 
$$
there exists a sequence of symmetric Nash equilibrium payoffs $(e^N)$ in the $N-$player game such that 
\be\label{liminfeN}
\lim_{N\to +\infty} e^N =e. 
\ee
\end{Theorem}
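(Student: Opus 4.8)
The plan is to construct, for each target payoff $e\in[e_{\min},e_{\max})$ and each large $N$, an explicit symmetric profile of generalized Markov strategies that enforces a cooperative ``social-type'' behavior through a threat mechanism, and then verify that this profile is an $\ep$-Nash equilibrium with payoff converging to $e$. The idea is reminiscent of a Folk Theorem: players agree to follow a prescribed feedback $\alpha^*$ that steers the empirical distribution toward a measure $m^*$ realizing the cost $e$, and any unilateral deviation is detected (via the positions of the players) and punished by switching to a regime whose payoff is at least as bad as the MFG payoff $e_{MFG}$. Because $e<e_{MFG}\le e_{\max}$ is strictly better than the punishment value for the deviator (once $e<e_{MFG}$; the boundary case $e=e_{MFG}$ being the MFG equilibrium itself), no player gains by deviating, so the cooperative profile is sustainable.

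\textbf{Step 1 (target distribution and cooperative feedback).} First I would fix, for the given $e\in[e_{\min},e_{MFG})$, a pair $(m^*,\alpha^*)$ satisfying the continuity equation \eqref{continuityeq} with
$$
\inte L(\alpha^*(x),x)\,m^*(dx)+F(m^*)= e,
$$
which exists by interpolating between the social-cost minimizer $(\tilde m,\tilde\alpha)$ (giving $e_{\min}$) and the MFG data $(\mu_0,-H_p(Du_0,\cdot))$ (giving $e_{MFG}$), using continuity of the cost functional along a suitable path of admissible pairs. The cooperative regime has each player use the distributed feedback $\alpha^*$, so that by ergodicity/propagation-of-chaos for the mean field SDE the empirical measure $\pi\sharp m^{N,i}_{{\bf X}_t}$ concentrates around $m^*$ and the per-player ergodic cost is close to $e$ for $N$ large.

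\textbf{Step 2 (detection and punishment).} The crux is the threat. I would design $\bar\alpha^1$ as a two-phase strategy: a \emph{cooperative phase} in which everyone plays $\alpha^*$, and a \emph{punishment phase} triggered if a statistical test on the observed trajectories signals that some player has deviated from $\alpha^*$. The test must be built so that (a) under the cooperative profile it (almost) never triggers in the ergodic average, and (b) any deviation producing a cost strictly below the punishment value is detected with overwhelming probability. In the punishment phase the remaining players revert to the MFG (or a worse) feedback targeting the deviator, forcing the deviator's ergodic cost up to at least $-\lambda_0+F(\mu_0)=e_{MFG}$ (or higher). The detectability is where the ergodic, infinite-horizon nature is essential: over an infinite time average one can accumulate unboundedly much statistical information about a player's drift from position observations alone, which is precisely what fails in finite horizon (Lacker \cite{La18}) and is what makes learning possible here.

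\textbf{Step 3 (verification and the limit).} With the profile in hand, I would estimate $J^i(\bar\alpha)$, showing $|J^i(\bar\alpha)-e|\le\ep$ for $N$ large by the concentration in Step 1; and for any deviation $\alpha^i$ I would split the time average into the (negligible in density) undetected-deviation contribution and the detected-deviation contribution, bounding the deviator's cost below by $e_{MFG}-\ep>e$, which yields the $\ep$-Nash inequality $J^i(\bar\alpha)\le J^i(\alpha^i,(\bar\alpha^j)_{j\neq i})+\ep$. Finally, extracting $e^N:=J^1(\bar\alpha)$ for a sequence of shrinking $\ep$ and growing $N$ gives \eqref{liminfeN}. \textbf{The main obstacle} is Step 2: making the detection-and-punishment mechanism rigorous in a framework where players observe only positions (not controls) under a nondegenerate noise. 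One must quantify, via Girsanov/likelihood-ratio estimates for the diffusions, exactly how much a deviation of given cost-advantage perturbs the law of the observed trajectories over long horizons, and show that the ergodic average of a position-based detection statistic separates the cooperative regime from any profitable deviation — all uniformly enough in $N$ to pass to the limit. The boundary endpoint $e=e_{MFG}$ is trivial (it is the MFG equilibrium), and the case $e\in[e_{MFG},e_{\max})$, if pursued, would require punishing toward a value exceeding $e_{MFG}$, leveraging the upper bound $e_{\max}=-\lambda_0+\max F$.
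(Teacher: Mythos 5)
Your overall architecture (realize $e$ by an admissible stationary pair obtained by interpolation, then sustain it by a trigger strategy that switches to a punishment regime, in the spirit of the Folk Theorem) is indeed the paper's strategy. But the proposal has two genuine gaps, and the first one sits exactly at the step you yourself flag as ``the main obstacle''. You propose to \emph{detect deviations from the control} $\alpha^*$ by a likelihood-ratio test on the observed trajectories and to prove that every profitable deviation is detected with overwhelming probability. This is not how the difficulty is resolved, and it is far from clear it can be carried out: a deviator may use a path-dependent, non-Markovian drift, and quantifying ``any deviation with a given cost advantage perturbs the law of the positions detectably, uniformly in $N$'' is precisely the hard analytic content you are assuming. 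The paper's key idea is to \emph{not} detect the control at all: the trigger only monitors whether each player's time-averaged occupation measure $\frac1t\int_0^t\delta_{X^j_s}ds$ stays within $\delta$ of $\hat m$. A deviator who escapes detection is then forced to keep her occupation measure near $\hat m$, and Lemma \ref{lemma2} (a compactness argument on closed measures in the sense of \eqref{closedcond}) shows that no closed measure with second marginal near $\hat m$ can have $L$-cost below $\inte L(\hat\alpha(x),x)\hat m(dx)-\ep$; hence undetected deviations are automatically unprofitable and no statistical identification of the drift is needed. This in turn forces a constraint your Step 1 ignores: $\hat\alpha$ must be \emph{optimal} for $\hat m$ among all drifts keeping $\hat m$ invariant (otherwise a deviator could hold the occupation measure at $\hat m$ with a cheaper drift and beat $e$ undetected). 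The paper secures this by interpolating the potentials, $\phi^\lambda=(1-\lambda)\tilde u+\lambda\hat u'$ and $\alpha^\lambda=-H_p(D\phi^\lambda,x)$, rather than interpolating arbitrary admissible pairs $(m,\alpha)$.

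The second gap is the range of $e$. You only construct the interpolation and the punishment for $e\in[e_{\min},e_{MFG})$, punishing toward the MFG value $e_{MFG}=-\lambda_0+F(\mu_0)$; for $e\in[e_{MFG},e_{\max})$ this threat is worthless, since the deviator would be ``punished'' down to a payoff no worse than what she already has. The theorem covers the whole interval $[e_{\min},e_{\max})$ with $e_{\max}=-\lambda_0+\max F$, which can be strictly larger than $e_{MFG}$. The paper handles this by choosing the punishment drift $\alpha^n$ so that its invariant measure $m^n$ nearly maximizes $F$ (Lemma \ref{lem.zlejgsf}); after the trigger, the other players' empirical measure converges to $m^n$, so the deviator pays at least $-\lambda_0$ in running cost (by the characterization \eqref{eq.characlambda0}, valid for \emph{any} closed occupation measure, with no need to identify her control) plus approximately $\max F$ in coupling cost, i.e.\ nearly $e_{\max}>e$. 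You mention this extension in one sentence but do not carry it out, and without it the statement as given is not proved.
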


Let us recall that $e_{\min}$, defined by \eqref{defemin}, corresponds to the ``social cost'', while $e_{\max}$, introduced in \eqref{condcond}, is not smaller than the cost associated with the mean field game. In view of \eqref{condcond}, Theorem \ref{thm:main} implies that the limit of symmetric Nash equilibria in generalized Markov strategies is not necessarily an MFG equilibrium. This is in sharp contrast with the finite horizon problem studied by Lacker in \cite{La18}. 

The construction of the symmetric Nash equilibrium payoffs $e^N$ is based on the ``Folk Theorem'' in differential games: see \cite{BCR}. In general, the Folk Theorem is related to the observation of the control of the players. Surprisingly here, only the observation of the position of the other players is necessary: this is specific to the ergodic cost (and of the particular structure of our game). 

%%%%%%%%%%%%%
\subsection{Proof of the main theorem}

As $F$ and $L$ are bounded below, we can assume without loss of generality that 
\be\label{FLgeq0}
L\geq 0, \; F\geq 0.
\ee

The first step consists in showing that the cost $e$ can be achieved by a suitable stationary solution $(\hat m, \hat \alpha)$: 

\begin{Lemma}
	\label{lem.zlejgsf}
 There exists $(\hat m, \hat \alpha)$, of class $C^1$, with $\hat m$ a probability measure on $\T^d$ with a positive density, satisfying \eqref{continuityeq} and such that 
\be\label{repe}
e  = \min_\alpha \inte L( \alpha(x),x)\hat m(x)dx +F(\hat m)=  \inte L(\hat \alpha(x),x)\hat m(x)dx +F(\hat m),
\ee
where the infimum is taken over the vector fields $\alpha\in L^2_{\hat m}(\T^d,\R^d)$ such that $(\hat m, \alpha)$ satisfies \eqref{continuityeq}. 

In addition, there exists a sequence $(m^n,\alpha^n)$, of class $C^1$,  with $m^n$ a probability measure on $\T^d$ with a positive density, satisfying \eqref{continuityeq} and such that 
$$
\lim_n F(m^n)=\max F, \qquad  \min_\alpha \inte L( \alpha(x),x) m^n(x)dx =  \inte L( \alpha^n(x),x) m^n(x)dx. 
$$
\end{Lemma}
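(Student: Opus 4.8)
The plan is to prove the two assertions of Lemma \ref{lem.zlejgsf} separately, since they are structurally parallel: both produce a stationary pair $(m,\alpha)$ of class $C^1$ solving the continuity equation \eqref{continuityeq} and realizing a prescribed value of the reduced cost. The key analytic fact I would lean on is already recorded in the excerpt: by \cite{GV}, under \eqref{HypL} and \eqref{HypF}, the social-cost problem \eqref{defemin} admits a minimizer $(\tilde m,\tilde\alpha)$ together with an associated potential $\tilde u\in C^2$ solving the MFG system, so that $\tilde m$ is $C^2$ and strictly positive and $\tilde\alpha=-H_p(D\tilde u,\cdot)$ is $C^1$. This pair already realizes the left endpoint $e_{\min}$ of the target interval. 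Likewise, the maximization problem defining $e_{\max}=-\lambda_0+\max F$ furnishes the data for the second assertion.

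\medskip
\noindent\textbf{First step (realizing the endpoint values).} For the endpoint $e=e_{\min}$ I would simply take $(\hat m,\hat\alpha)=(\tilde m,\tilde\alpha)$ from \cite{GV}: the inner minimum $\min_\alpha\inte L(\alpha,x)\hat m\,dx$ over vector fields $\alpha$ with $(\hat m,\alpha)$ satisfying \eqref{continuityeq} is a strictly convex problem in $\alpha$ (by the uniform convexity \eqref{HypL} of $L$ in its first argument), so its unique minimizer is characterized by a first-order condition; the MFG system shows $\tilde\alpha=-H_p(D\tilde u,\cdot)$ is exactly that minimizer, giving the second equality in \eqref{repe}. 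For the sequence $(m^n,\alpha^n)$ converging to $\max F$, I would pick $m^n$ approaching a maximizer $m^*$ of $F$ (by \eqref{HypF}, $F$ is continuous on the compact $\Pk$, so $\max F$ is attained), but I must ensure each $m^n$ is $C^1$, positive, and solves \eqref{continuityeq} for some $\alpha^n$. This is where a smoothing/construction argument enters: given any positive smooth density $m^n$, the field $\alpha^n:=\tfrac{1}{2}\nabla\log m^n=\tfrac{1}{2}\nabla m^n/m^n$ automatically solves $-\tfrac12\Delta m^n+\dive(m^n\alpha^n)=0$, so I can freely prescribe a sequence of smooth positive densities $m^n\to m^*$ in the weak-$*$ topology and read off the corresponding $\alpha^n$; then $\min_\alpha\inte L(\alpha,x)m^n\,dx$ is attained at the unique minimizer $\alpha^n$, which is $C^1$ by the implicit-function/elliptic-regularity structure of the associated HJB equation.

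\medskip
\noindent\textbf{Second step (interpolating to reach an arbitrary $e\in[e_{\min},e_{\max})$).} The real content is producing a single stationary pair $(\hat m,\hat\alpha)$ whose reduced cost equals an \emph{arbitrary} prescribed value $e$ strictly below $e_{\max}$. I would set up a continuous one-parameter family of admissible pairs joining the minimizer $(\tilde m,\tilde\alpha)$ (reduced cost $e_{\min}$) to a density $m^n$ with $F(m^n)$ close to $\max F$ (reduced cost close to $e_{\max}$), and then apply an intermediate-value argument. Concretely, the map
$$
\Phi(m):=\min_{\alpha}\inte L(\alpha(x),x)\,m(x)\,dx+F(m),
$$
taken over $C^1$ positive densities with the induced optimal $\alpha$, is continuous along any continuous path of densities (continuity of $F$ by \eqref{HypF}, and continuity of the inner minimum by convex duality plus dominated convergence). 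Choosing a continuous path $s\mapsto m_s$ in the space of positive $C^1$ densities from $\tilde m$ to $m^n$ — for instance a suitably renormalized log-linear interpolation $m_s\propto \tilde m^{\,1-s}(m^n)^{s}$, which stays positive and $C^1$ — the value $\Phi(m_s)$ moves continuously from $e_{\min}$ to a number exceeding $e$, so some $s$ yields $\Phi(m_s)=e$; that $m_s$ and its optimal $\alpha$ are the desired $(\hat m,\hat\alpha)$.

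\medskip
\noindent\textbf{Main obstacle.} The delicate point is the regularity and well-posedness of the inner minimization: I must verify that for each density along the path the optimal field $\alpha$ is genuinely $C^1$ (so that $(\hat m,\hat\alpha)$ has the claimed smoothness) and that $\Phi$ depends continuously on $m$ including through this optimizer. This requires pairing the convexity \eqref{HypL} with elliptic regularity for the adjoint potential, and checking that the log-interpolation path keeps densities uniformly bounded away from zero so that $\nabla\log m_s$ stays controlled. I expect the continuity of $\Phi$ and the $C^1$ regularity of the interpolating optimizers to be the technically heaviest part, whereas the intermediate-value conclusion itself is routine once continuity is in hand.
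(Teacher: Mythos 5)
Your overall strategy is the same as the paper's: produce a sequence of admissible pairs with $F(m^n)\to\max F$, then join the social-cost minimizer $(\tilde m,\tilde\alpha)$ to one such pair by a continuous path and conclude by the intermediate value theorem. The genuine gap lies in how you manufacture the pairs $(m,\alpha)$: at every stage you fix the density first and then invoke the existence, $C^1$ regularity and continuous dependence of the \emph{optimal} vector field for that density. Recall that a minimizer of $\alpha\mapsto\inte L(\alpha(x),x)m(dx)$ under the constraint \eqref{continuityeq} is characterized as $\alpha=-H_p(D\phi,\cdot)$ for a potential $\phi$; so with $m$ prescribed you must solve the quasilinear prescribed-divergence equation $-\frac12\Delta m-\dive(m\,H_p(D\phi,\cdot))=0$ \emph{for $\phi$}, and then prove $C^1$ regularity of $-H_p(D\phi,\cdot)$ and its stability along your path $s\mapsto m_s$. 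None of this is established: your appeal to ``the implicit-function/elliptic-regularity structure of the associated HJB equation'' does not apply, because the unknown here is $\phi$ in a divergence-form equation with $m$ given, not a value function of an HJB equation. Moreover the sentence asserting that $\alpha^n:=\tfrac12\nabla\log m^n$ ``is attained at the unique minimizer $\alpha^n$'' conflates an admissible competitor with the minimizer: $\tfrac12\nabla\log m^n$ satisfies \eqref{continuityeq} but is not of the form $-H_p(D\phi,\cdot)$ for general $L$, so it does not satisfy the optimality required in the second assertion of the lemma. You correctly flag these points as the ``main obstacle,'' but they are precisely the content of the lemma and are left unproved.

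The paper circumvents all of this by always going from potential to density, never the reverse. For the sequence, it minimizes the penalized functional $n^{-1}\inte L(\alpha,x)m\,dx-F(m)$ over admissible pairs; the optimality system is an MFG system whose solution gives $\alpha^n=-H_p(Du^n,\cdot)$, so the optimality of $\alpha^n$ for $m^n$ and the $C^1$ regularity and positivity of $m^n$ come for free from elliptic theory, while the gradient-of-log fields $D\log\mu^k$ are used only as \emph{competitors} to force $F(m^n)\to\max F$. For the interpolation, instead of a path of densities it takes a path of potentials $\phi^\lambda=(1-\lambda)\tilde u+\lambda\hat u'$, sets $\alpha^\lambda=-H_p(D\phi^\lambda,\cdot)$ and lets $m^\lambda$ be the associated invariant measure: then $\alpha^\lambda$ is automatically the inner minimizer for $m^\lambda$ by the characterization above, and $\lambda\mapsto(m^\lambda,\alpha^\lambda)$ is continuous with uniformly positive, $C^1$ densities by standard invariant-measure theory. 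If you reparametrize your construction by potentials in this way, your intermediate-value argument goes through and the technical obstacles you identified disappear.
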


\begin{proof} In a first step, we show that there exists $(\hat m', \hat \alpha')$ of class $C^1$, with $\hat m'$ a probability measure on $\T^d$ with a positive density, satisfying \eqref{continuityeq} and such that
\be\label{khjelnkrsfdc}
e  < \min_\alpha \inte L( \alpha(x),x)\hat m(x)dx +F(\hat m')=  \inte L(\hat \alpha'(x),x)\hat m'(x)dx +F(\hat m'),
\ee
where the infimum is taken over the vector fields $\alpha\in L^2_{\hat m'}(\T^d,\R^d)$ such that $-\Delta \hat m'+{\rm div}(\hat m' \alpha)=0$ in $\T^d$. For proving this, we now build the sequence $(m^n,\alpha^n)$, where, for each $n$, $(m^n,\alpha^n)$ is a minimum of the problem:
\begin{align*}
\inf\Bigl\{ n^{-1}\inte L(\alpha(x),x)m(x)dx -F(m), \;{\rm where }\;  m\in \Pw, \; \alpha \in L^2_{m}(\T^d, \R^d), \qquad & \\
 -\Delta m+{\rm div} (m\alpha)=0\; {\rm in }\; \T^d\Bigr\}. & 
\end{align*}
Let us recall that such a minimum exists (see Subsection \ref{subsec.soccost}). In addition,  there exists $u^n$ such that $(u^n, m^n)$ solves the MFG system 
$$
\left\{\begin{array}{l}
- \frac12\Delta u^n +H(Du^n,x)= - n\frac{\delta F}{\delta m}(m^n,x),\\
\\
- \frac12\Delta m^n -{\rm div}( m^n H_p(Du^n, x))= 0,
\end{array}\right.
$$
with $\alpha^n= -H_p(Du^n, x)$. In particular, $(m^n, \alpha^n)$ is of class $C^1$ and $m^n$ has a positive density. Next we claim that 
$$
\lim_n F(m^n)=  \max F.
$$
Indeed, we can find another sequence $(\mu^k)$ of smooth and positive probability densities on $\T^d$ such that $(F(\mu^k))$ converges to $\max F$.  Let us set $\beta^k:=D(\log(\mu^k))$. Then $(\mu^k, \beta^k)$ satisfies the constraint \eqref{continuityeq} and therefore, for any $k$ and by the optimality of $(m^n,\alpha^n)$, 
\begin{align*}
\max F\geq \limsup_n F(m^n) & \geq \liminf_n F(m^n) \geq  \liminf_n \Bigl( - n^{-1}\inte L(\alpha^n(x),x)m^n(x)dx + F(m^n)   \Bigr) \\
& \geq  \liminf_n  \Bigl( - n^{-1}\inte L(\beta^k(x),x)\mu^k(x)dx + F(\mu^k)   \Bigr)  = F(\mu^k).
\end{align*}
Letting $k\to+\infty$ proves that $(F(m^n))$ converges to $\max F$. Then, recalling the  characterization of $\lambda_0$ in \eqref{eq.characlambda0}, we have
$$
e< -\lambda_0+\max F \leq   \liminf_n \inte L(\alpha^n(x),x)m^n(x)dx + F(m^n).
$$
So, for $n$ large enough, we have 
$$
e<  \inte L(\alpha^n(x),x)m^n(x)dx + F(m^n).  
$$
Setting $(\hat m', \hat \alpha'):= (m^n,\alpha^n)$ for such a large $n$ proves the first step. We also set for later use $\hat u':= u^n$ and recall that $\alpha^n= -H_p(D\hat u',x)$. 

We now build the pair $(\hat m, \hat \alpha)$ required in the lemma. For $\lambda \in [0,1]$, let $m^\lambda$ be the unique invariant measure associated with the vector field  $\alpha^\lambda (x):= -H_p(D\phi^\lambda(x),x)$, where $\phi^\lambda :=  (1-\lambda) \tilde u +\lambda \hat u'$. 
Note that $m^\lambda$ is unique, of class $C^1$ and has a positive density since $\alpha^\lambda$ is of class $C^1$. Moreover, $\lambda \to m^\lambda$ is continuous in $C^1$ by the same argument. Next we note that $\alpha^\lambda$ is a minimum of the problem: 
$$
\inf\left\{ \inte L(\alpha(x),x)m^\lambda(dx), \;{\rm where }\;  \alpha \in L^2_{m^\lambda}(\T^d, \R^d),\; -\Delta m^\lambda+{\rm div} (m^\lambda\alpha)=0\; {\rm in }\; \T^d\right\} .
$$
Indeed it is well-known that a vector field $\alpha$ is a minimum of this problem  if and only if there exists $\phi\in H^1(\T^d)$ such that $\alpha= -H_p(D\phi,x)$ and the pair $(m^\lambda, -H_p(D\phi,x))$ satisfies \eqref{continuityeq}: this is indeed the case for $\alpha^\lambda$ by construction. 

As the map $\lambda \to \inte L(\alpha^\lambda,x)m^\lambda+F(m^\lambda)$ is continuous and as it is equal to $e_{\min}$ (which is not larger than $e$) for $\lambda=0$ and---by \eqref{khjelnkrsfdc}---is not smaller than $e$ for $\lambda=1$, we can find $\lambda\in [0,1]$ such that $e=\inte L(\alpha^\lambda,x)m^\lambda+F(m^\lambda)$. We set $(\hat m, \hat \alpha):= (m^\lambda,\alpha^\lambda)$ from now on and $(\hat m, \hat \alpha)$ satisfies the required conditions. 
\end{proof}

 A second lemma shows that $\hat\alpha$ remains $\epsilon$-optimal in \eqref{repe}, after a sufficiently small perturbation of $\hat m$.

\begin{Lemma}
	\label{lemma2}
	For any $\ep>0$ there exists $\delta>0$ (depending on $\ep$), such that, for any closed Borel probability measure $\sigma$ on $\R^d\times \T^d$ with second marginal $\mu$ and with ${\bf d}_1(\mu, \hat m
		)\leq \delta$, one has 
		$$
		\int_{\R^d\times \T^d} L( a, x) \sigma (da, dx) \geq \int_{\T^d} L(\hat \alpha(x), x) \hat m(dx) -\ep. 
		$$
	\end{Lemma}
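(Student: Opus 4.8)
The plan is to argue by contradiction and convert the statement about a \emph{nearby} second marginal into the \emph{exact} optimality of $\hat\alpha$ recorded in \eqref{repe}. Suppose the conclusion fails for some $\ep>0$: then there is a sequence $(\sigma_n)$ of closed probability measures on $\R^d\times\T^d$, with second marginals $\mu_n$ satisfying $\dk(\mu_n,\hat m)\to 0$, such that
$$
\int_{\R^d\times\T^d} L(a,x)\,\sigma_n(da,dx) < \inte L(\hat\alpha(x),x)\hat m(dx)-\ep \qquad\text{for all }n.
$$
The uniform convexity in \eqref{HypL} together with $L\geq 0$ from \eqref{FLgeq0} yields a coercivity bound of the form $L(a,x)\geq c_0|a|^2-C_1$, with $c_0>0$ and $C_1$ depending only on $C_0$ and on $\sup_x|L(0,x)|$, $\sup_x|D_\alpha L(0,x)|$ (finite by periodicity). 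Hence $\sup_n\int|a|^2\,\sigma_n(da,dx)<\infty$, so that $(\sigma_n)$ is tight on $\R^d\times\T^d$; passing to a subsequence, $\sigma_n\rightharpoonup\sigma$ narrowly for some probability measure $\sigma$.

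Next I would pass all the relevant structure to the limit. The second marginal of $\sigma$ is the narrow limit of $\mu_n$, which equals $\hat m$ since $\dk(\mu_n,\hat m)\to0$. The closedness condition \eqref{closedcond} survives the limit: for fixed $\phi\in C^\infty(\T^d)$ the integrand $a\cdot D\phi+\frac12\Delta\phi$ is continuous with at most linear growth in $a$, and the uniform second-moment bound provides the uniform integrability needed to pass $\int(a\cdot D\phi+\frac12\Delta\phi)\sigma_n=0$ to the limit; thus $\sigma$ is closed. Finally, since $L\geq0$ is continuous, the energy $\sigma\mapsto\int L\,d\sigma$ is lower semicontinuous for narrow convergence, so
$$
\int_{\R^d\times\T^d} L(a,x)\,\sigma(da,dx)\ \leq\ \liminf_n \int_{\R^d\times\T^d} L(a,x)\,\sigma_n(da,dx)\ \leq\ \inte L(\hat\alpha(x),x)\hat m(dx)-\ep.
$$
In particular $\int|a|^2\,\sigma(da,dx)<\infty$.

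The core step is then to show that $\sigma$, being closed with second marginal \emph{exactly} $\hat m$, must satisfy the reverse inequality, contradicting the display above. Disintegrating $\sigma(da,dx)=\sigma_x(da)\,\hat m(dx)$, I set the barycenter $\bar\alpha(x):=\int_{\R^d}a\,\sigma_x(da)$, which lies in $L^2_{\hat m}(\T^d,\R^d)$ by Jensen and the finite second moment. Rewriting \eqref{closedcond} after disintegration gives $\inte(\bar\alpha(x)\cdot D\phi+\frac12\Delta\phi)\hat m(dx)=0$ for all $\phi$, which is exactly the weak formulation of the continuity equation \eqref{continuityeq} for the pair $(\hat m,\bar\alpha)$. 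Jensen's inequality applied fibrewise to the convex map $a\mapsto L(a,x)$ gives $\int_{\R^d}L(a,x)\sigma_x(da)\geq L(\bar\alpha(x),x)$, hence $\int L\,d\sigma\geq\inte L(\bar\alpha(x),x)\hat m(dx)$. Since $(\hat m,\bar\alpha)$ is admissible in the minimization \eqref{repe}, the optimality of $\hat\alpha$ gives $\inte L(\bar\alpha,x)\hat m\geq\inte L(\hat\alpha,x)\hat m$, so $\int L\,d\sigma\geq\inte L(\hat\alpha,x)\hat m(dx)$ --- contradicting the strict loss of $\ep$ obtained above.

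I expect the main obstacle to be the two limiting arguments rather than the concluding algebra: namely, justifying that the closedness constraint \eqref{closedcond} is stable under narrow convergence --- which requires controlling the linear-growth term $a\cdot D\phi$ through the uniform second-moment estimate, i.e.\ uniform integrability --- and the lower semicontinuity of the (noncompactly supported) energy $\int L\,d\sigma$. The passage from the vector-field optimality in \eqref{repe} to general closed measures via the barycenter, together with Jensen, is conceptually the key point but is comparatively routine once $\sigma$ is in hand. Tightness itself is immediate from the quadratic coercivity of $L$.
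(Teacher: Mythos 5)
Your argument is correct and follows essentially the same route as the paper's proof: contradiction, tightness from the coercivity of $L$, passage to a closed limit measure with second marginal $\hat m$, then disintegration, the barycentric projection and Jensen's inequality to contradict the optimality of $\hat\alpha$ in \eqref{repe}. You are somewhat more explicit than the paper about the uniform integrability needed to pass the closedness condition \eqref{closedcond} to the limit, which is a welcome precision but not a different method.
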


\begin{proof}
  By the definition of $(\hat m,\hat \alpha)$, $\hat \alpha$ minimizes the quantity
$$
\inte L(x,\alpha(x)) \hat m(x),
$$
where the infimum is taken over the  maps $\alpha\in L^2_{\hat m}(\T^d)$ such that 
$$
-\Delta \hat m +{\rm div} (\hat m \alpha)=0. 
$$
Let us now argue by contradiction and assume that there exists a sequence $(\sigma^n)$ of closed measures, with second marginal $m^n$ converging to $\hat m$ and with
\be\label{lqzeflkjn}
\int_{\R^d\times \T^d} L( a, x) \sigma^n (da, dx) < \int_{\T^d} L(\hat \alpha(x), x) \hat m(dx) -\ep. 
\ee
In view of the coercivity of $L$ with respect to the first variable, the sequence $\sigma^n$ is tight and there exists a subsequence (still labelled in the same way) which converges to some measure $\sigma$ on $\R^d\times \T^d$. Note that $\sigma$ is closed (as the limit of the $(\sigma^n)$ which are closed) and its second marginal is $\hat m$. Let us disintegrate $\sigma$ with respect to $\hat m$: $\sigma=  \sigma_x(d\alpha)\hat m(dx)$ and let us set $\tilde \alpha(x):= \inte \alpha \sigma_x(d\alpha)$. Then $(\hat m, \tilde \alpha)$  satisfies \eqref{continuityeq} since the measure $\sigma$ is closed. In addition, by convexity of $L$ with respect to the first variable and \eqref{lqzeflkjn}, 
\begin{align*}
\inte L(\tilde\alpha(x),x) \hat m(dx) &  \leq \int_{\R^d\times \T^d} L( a, x) \sigma_x (da) \hat m(dx) \leq  \limsup_n  \int_{\R^d\times \T^d} L( a, x) \sigma^n (da, dx) \\
&\qquad  \leq \int_{\T^d} L(\hat \alpha(x), x) \hat \mu(dx) -\ep,
\end{align*}
which contradicts the optimality of $\hat \alpha$.
\end{proof}

We now build the Nash equilibrium payoff and the corresponding strategies. Let $(\hat m, \hat \alpha)$ and $(m^n, \alpha^n)$ be as in Lemma \ref{lem.zlejgsf}. 
Let us set 
$$
e^N:= \int_{\T^d} L(\hat \alpha(x),x)\hat m(dx) + \int_{(\T^d)^{N-1}} F(m^{N,1}_{\bf x})\hat m(dx_2)\dots \hat m(dx_N).
$$
(Recall that the empirical measure $m^{N,1}_{\bf x}$ is defined by \eqref{defem}). By the Glivenko-Cantelli law of large numbers and \eqref{repe}, 
\be \label{eNcve}
\mbox{\rm $(e^N)$ converges to $e$.}
\ee
As $e< -\lambda_0+ \max F$ and by definition of $(m^n, \alpha^n)$, we can choose $n$ large enough (and fixed from now on) such that, for any $N$ large enough (given again by the Glivenko-Cantelli law of large numbers), 
\be\label{cond.ineq}
e^N \leq -\lambda_0+ \int_{(\T^d)^{N-1}} F(m^{N,1}_{\bf x})   m^n(dx_2)\dots   m^n(dx_N).
\ee

Our aim is to prove that, under the above conditions, $e^N$ is a Nash equilibrium payoff of the $N-$player game played in generalized Markov strategies. For this, we fix $\ep>0$. Given $T, \delta>0$ to be chosen below depending on $\ep$, we define  the strategies $\beta^{N,T,\delta,i}$ as follows: Given $(X^1, \dots, X^N)\in ({\mathcal C}^0(\R_+,\R^d))^N$, we define 
$$
\theta(X^1, \dots, X^N) = \inf \left\{t\geq T, \; \sup_{j\in\{1, \dots, N\} }  {\bf d}_1( \pi \sharp \left(\frac{1}{t}\int_0^t \delta_{X^j_s}ds\right), \hat m) \geq \delta\right\}
$$
(with the usual convention $\theta(X^1, \dots, X^N)=+\infty$ if the right-hand side is empty). Then we set 
$$
\beta^{N,T,\delta,i}(X^1, \dots, X^N)_t= \left\{\begin{array}{ll}
\hat \alpha(X^i_t) & {\rm if }\; t \leq \theta(X^1, \dots, X^N)\\ 
 \alpha^n(X^i_t) & {\rm otherwise}. 
\end{array}\right.
$$

We are going to show that, if $T$ is large enough and $\delta$ is small enough (depending on $\ep$), then $(\beta^{N,T,\delta,i})$ is an $\ep-$Nash equilibrium with payoff given by $e^N$.\\

\begin{Lemma}\label{lem1} The payoff of the strategies $(\beta^{N,T,\delta,i})$  is almost $e^N$: 
\be\label{payoffbetaNTdelta}
|J^i((\beta^{N,T,\delta,j})) - e^N| \leq \ep\qquad \forall i\in \{1, \dots, N\}. 
\ee
\end{Lemma}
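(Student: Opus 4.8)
The plan is to exploit the fact that when \emph{every} player uses the common profile $(\beta^{N,T,\delta,j})_j$, the controlled system decouples. As long as $t\le \theta({\bf X})$ each player uses the feedback $\hat\alpha$, so on $[0,\theta]$ the processes $X^1,\dots,X^N$ solve the \emph{decoupled} equations $dX^j_s=\hat\alpha(X^j_s)\,ds+dB^j_s$; since $(\hat m,\hat\alpha)$ satisfies the Fokker--Planck equation \eqref{continuityeq}, each $X^j$ is then an ergodic diffusion on $\T^d$ with unique invariant measure $\hat m$. On $\{\theta<\infty\}$, after the finite time $\theta$ all players switch to $\alpha^n$, so by the strong Markov property the system restarts as $N$ independent ergodic diffusions with invariant measure $m^n$. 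By symmetry of the profile it suffices to treat $i=1$. The first step is to record these two ergodic regimes.

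Next I would apply the pathwise ergodic theorem for uniformly elliptic diffusions on the compact connected torus (whose invariant density is smooth and positive, hence which enjoy a spectral gap, so that the product of the factors is again ergodic). On the event $\{\theta=\infty\}$ the profile coincides with the all-$\hat\alpha$ process for all times, whence
$$
\frac1S\int_0^S L(\hat\alpha(X^i_s),X^i_s)\,ds\ \longrightarrow\ \inte L(\hat\alpha(x),x)\hat m(dx)
$$
and, applying the ergodic theorem to the product diffusion $(X^j)_{j\neq i}$ with invariant measure $\hat m^{\otimes(N-1)}$ and the bounded continuous observable ${\bf x}\mapsto F(\pi\sharp m^{N,1}_{\bf x})$,
$$
\frac1S\int_0^S F(\pi\sharp m^{N,i}_{{\bf X}_s})\,ds\ \longrightarrow\ \int_{(\T^d)^{N-1}}F(m^{N,1}_{\bf x})\hat m(dx_2)\cdots\hat m(dx_N).
$$
Thus on $\{\theta=\infty\}$ the pathwise Cesàro limit of the running cost is exactly $e^N$. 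The same theorem, applied after the Cesàro-negligible initial segment $[0,\theta]$ with drift $\alpha^n$ and invariant measure $m^n$, shows that on $\{\theta<\infty\}$ the Cesàro limit is a single deterministic constant $e^{\mathrm{punish}}$; both limits are bounded by a constant $C$ depending only on $\|\hat\alpha\|_\infty,\|\alpha^n\|_\infty$ and $\sup F$, because $L(\beta^i_s,X^i_s)$ stays bounded along every trajectory.

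I would then estimate $\P(\theta<\infty)$. Since the switched and unswitched processes agree up to $\theta$ and $\theta$ is a hitting time of a set depending only on the trajectory up to the current time, $\P(\theta<\infty)$ may be computed under the all-$\hat\alpha$ dynamics. There each occupation measure $\pi\sharp\frac1t\int_0^t\delta_{X^j_s}\,ds$ converges almost surely in $\dk$ to $\hat m$, so $\sup_{t\ge T}\max_{1\le j\le N}\dk(\cdot,\hat m)\to 0$ a.s.\ as $T\to\infty$ (a finite maximum over $j$), and therefore $\P(\theta<\infty)\to 0$ as $T\to+\infty$, for $N$ and $\delta$ fixed.

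Finally I would combine these facts: as the running cost is bounded, bounded convergence lets me pass the pathwise limits through the expectation, so the $\limsup_{S}\frac1S\E[\int_0^S\cdot]$ is a genuine limit equal to $\P(\theta=\infty)\,e^N+\P(\theta<\infty)\,e^{\mathrm{punish}}$, whence $|J^i(\beta)-e^N|=\P(\theta<\infty)\,|e^{\mathrm{punish}}-e^N|\le C'\,\P(\theta<\infty)$. Choosing $T$ large enough (with $\delta$ and $N$ fixed) makes the right-hand side $\le\ep$, which is \eqref{payoffbetaNTdelta}, and symmetry of the profile gives the same bound for every $i$. I expect the main obstacle to be the rigorous interchange of the pathwise ergodic limits with $\limsup_{S}\frac1S\E$, and in particular the almost-sure Cesàro convergence of the $F$-term, which requires the ergodicity of the product diffusion and care that the negligible segment $[0,\theta]$ does not corrupt the long-run average on $\{\theta<\infty\}$.
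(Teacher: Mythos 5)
Your proposal is correct and follows essentially the same route as the paper: couple the profile with the unswitched all-$\hat\alpha$ dynamics (they agree up to $\theta$), use the ergodic theorem for the occupation measures to make $\P(\theta<\infty)$ small by taking $T$ large, and split the ergodic average over $\{\theta=\infty\}$ (where it converges to $e^N$) and $\{\theta<\infty\}$ (whose contribution is $O(\P(\theta<\infty))$ by boundedness of the running cost). The only cosmetic difference is that you identify the exact Ces\`aro limit $e^{\mathrm{punish}}$ on $\{\theta<\infty\}$, where the paper simply bounds that term by a constant times $\P(\theta<\infty)$.
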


\begin{proof} Let  $(X^i_t)$  and $(\hat X^i_t)$ be respectively the solutions to the systems
\be\label{pashatX}
d X^i_t= \beta^{N,T,\delta,i} ({\bf X}_\cdot)_t dt +dB^i_t, \; X^i_0=x^i_0, \qquad i=1, \dots, N
\ee
and 
\be\label{hatX}
d\hat X^i_t= \hat \alpha(\hat X^i_t)dt +dB^i_t, \; X^i_0=x^i_0, \qquad i=1, \dots, N,
\ee
 We can find a filtered probability space endowed with an $\R^{N\times d}$-valued Brownian motion on which, for all $i$, both \eqref{pashatX} and \eqref{hatX} admit strong solutions $X^i$ and $\hat X^i$. In particular, setting $\theta=\theta(X^1,\ldots,X^N)$, they satisfy $X^i_s=\hat X^i_s$  for all $s\geq 0$ and $i$, a.s. on the event $\{ s\leq \theta\}$.\\
Define the random time
	\[\tau=\sup\left\{ t> 0,\sup_{i=1,\dots, N} \dk (\pi\sharp \left(\frac{1}{t} \int_0^t \delta_{\hat X^i_s} ds\right) , \hat m) \geq \delta \right\},\]
	with $\sup\emptyset=0$.
Since, by the ergodic Theorem, for $\P-$a.e. $\omega\in \Omega$,
\be\label{limhatXi}
\lim_{t\to +\infty}\pi\sharp\left( \frac{1}{t} \int_0^t \delta_{\hat X^i_s(\omega)} ds\right) = \hat m\qquad {\rm in} \; \Pk,
\ee
the time  $\tau$ is finite a.s. .
It follows that
$$
 \P \left[ \inf \left\{t\geq T, \; \sup_{j\in\{1, \dots, N\} }  {\bf d}_1( \pi\sharp\left(\frac{1}{t}\int_0^t \delta_{\hat X^j_s}ds\right), \hat m) \geq \delta\right\}<+\infty \right]=\P\left[ \tau \geq T \right] \to 0 \qquad {\rm as}\; T\to +\infty.
$$
So, given a fixed $K>0$ to be chosen below, we can choose $T$ large enough, depending on $N$, $\delta$, $\ep$  and $K$, such that 
$$
\P\left[ \theta <+\infty\right]\leq  K^{-1}\ep. 
$$
%{\it ...Mais peut-etre tu as raison : on peut passer directement du ergodic Theorem a l'existence de $T$ large enough.}\\
Recalling \eqref{FLgeq0},  we have
\begin{align*}
 J^i((\beta^{N,T,\delta,j})) & \leq  
  \limsup_{t\to+\infty} \frac{1}{t} \E\left[ {\bf 1}_{ \theta =\infty} \int_0^t L(\hat \alpha (\hat X^i_s),\hat X^i_s) + F(\pi\sharp m^{N,i}_{{\bf \hat X}_s})\ ds  \right] \\
& \qquad  + 
\limsup_{t\to+\infty} \frac{1}{t} \E\left[ {\bf 1}_{ \theta < \infty} \int_0^t L( \beta^{N,T,\delta,i} ({\bf X}_\cdot)_s, X^i_s)+ F(\pi\sharp m^{N,i}_{{\bf  X}_s})\ ds \right] \\
&\leq  \limsup_{t\to+\infty} \frac{1}{t} \E\left[ \int_0^t L(\hat \alpha (\hat X^i_s),\hat X^i_s) + F(\pi\sharp m^{N,i}_{{\bf \hat X}_s})\ ds  \right] + C\P[ \theta<+\infty],
\end{align*}
where $C= \max_{x\in \T^d} |L(\hat \alpha(x),x)|+ |L( \alpha^n(x),x)| + \|F\|_\infty$. By \eqref{limhatXi}, we have
\begin{align*}
& \lim_{t\to+\infty} \frac{1}{t}\int_0^t L(\hat \alpha (\hat X^i_s),\hat X^i_s) + F(\pi\sharp m^{N,i}_{{\bf \hat X}_s})\ ds   \\
& \qquad \qquad = 
\inte L(\hat\alpha(x),x) \hat m(x)dx + \int_{(\T^d)^{N-1}} F( m^{N,i}_{{\bf x}}) \Pi_{j\neq i} \hat m(x_j)dx_j =e^N.
\end{align*}
So 
\begin{align*}
 J^i((\beta^{N,T,\delta,j})) & \leq  e^N + C\P[ \theta<+\infty]\leq e^N+ C K^{-1}\ep \leq e^N+\ep,
\end{align*}
if we choose $K=C$. One can show in a similar way that $ J^i((\beta^{N,T,\delta,j}))  \geq  e^N-2\ep$,
which  proves that \eqref{payoffbetaNTdelta} holds. 
\end{proof}

Next we estimate the cost of player $1$ (to fix the ideas) if she deviates and plays some strategy $\beta$ instead of $\beta^{N,T,\delta,1}$.

\begin{Lemma}\label{lem2} Let $\beta$ be a generalized Markov strategy for player 1. Then 
$$
 J^1(\beta, (\beta^{N,T,\delta,j})_{j\neq 1} ) \geq e^N-\ep. 
 $$
\end{Lemma}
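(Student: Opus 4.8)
The plan is to prove the \emph{pathwise} lower bound
$$
\liminf_{t\to\infty}\frac1t\int_0^t\bigl(L(\beta_s,X^1_s)+F(\pi\sharp m^{N,1}_{{\bf X}_s})\bigr)\,ds\ \geq\ e^N-\ep\qquad \P\text{-a.s.},
$$
and then to integrate it using Fatou's lemma. Since $L,F\geq0$ by \eqref{FLgeq0}, the integrand $g_t:=\frac1t\int_0^t(L+F)\,ds$ is nonnegative, so Fatou gives $\liminf_t\E[g_t]\geq\E[\liminf_t g_t]\geq e^N-\ep$, and hence $J^1(\beta,(\beta^{N,T,\delta,j})_{j\neq1})=\limsup_t\E[g_t]\geq e^N-\ep$. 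Thus everything reduces to the almost-sure bound, which I would establish on the two complementary events $\{\theta=\infty\}$ and $\{\theta<\infty\}$, where $\theta=\theta(X^1,\dots,X^N)$ is the monitoring time. The crucial structural remark is that, whatever player $1$ does, the players $j\neq1$ never see her deviation in their own drift: before $\theta$ each follows the autonomous diffusion with drift $\hat\alpha$, and after $\theta$ the autonomous diffusion with drift $\alpha^n$; player $1$ influences them only through the time $\theta$.

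For the $F$-term I would invoke the ergodic theorem for these autonomous diffusions, whose invariant measures are $\hat m$ for the drift $\hat\alpha$ and $m^n$ for the drift $\alpha^n$ (this is exactly the Kolmogorov form of \eqref{continuityeq} and of the equation defining $m^n$). On $\{\theta=\infty\}$ the coordinates $X^j$, $j\neq1$, are independent ergodic diffusions with invariant law $\hat m$, so $\frac1t\int_0^t F(\pi\sharp m^{N,1}_{{\bf X}_s})\,ds\to\int_{(\T^d)^{N-1}}F(m^{N,1}_{\bf x})\prod_{j\neq1}\hat m(dx_j)$ a.s.; on $\{\theta<\infty\}$ they are eventually autonomous with invariant law $m^n$, the finite window $[0,\theta]$ is negligible in the time average, and the same theorem gives the limit $\int_{(\T^d)^{N-1}}F(m^{N,1}_{\bf x})\prod_{j\neq1}m^n(dx_j)$ a.s.

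For player $1$'s running cost I would work with the occupation measure $\sigma^1_t:=\frac1t\int_0^t\delta_{(\beta_s,\pi(X^1_s))}\,ds$ on $\R^d\times\T^d$, which is tight because $\beta$ is bounded. Applying It\^o's formula to $\phi(X^1_\cdot)$ for $\phi\in C^\infty(\T^d)$, dividing by $t$ and using $\frac1t\int_0^tD\phi\cdot dB^1_s\to0$ a.s., every weak limit point $\sigma$ of $(\sigma^1_t)$ satisfies the closedness relation \eqref{closedcond}, pathwise. Hence on $\{\theta<\infty\}$ the energy characterization \eqref{eq.characlambda0} yields $\liminf_t\frac1t\int_0^tL(\beta_s,X^1_s)\,ds\geq-\lambda_0$, and combining with the $F$-limit and \eqref{cond.ineq} gives $\liminf_t g_t\geq -\lambda_0+\int_{(\T^d)^{N-1}}F(m^{N,1}_{\bf x})\prod_{j\neq1}m^n(dx_j)\geq e^N$ there. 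On $\{\theta=\infty\}$ the very definition of $\theta$ forces $\dk(\pi\sharp\tfrac1t\int_0^t\delta_{X^1_s}ds,\hat m)\le\delta$ for all $t\ge T$, so the second marginal of any limit point $\sigma$ lies within $\delta$ of $\hat m$; choosing $\delta$ as in Lemma \ref{lemma2} gives $\int L\,d\sigma\ge\inte L(\hat\alpha(x),x)\hat m(dx)-\ep$, whence $\liminf_t\frac1t\int_0^tL\ge\inte L(\hat\alpha,x)\hat m-\ep$ and, adding the $F$-limit, $\liminf_t g_t\ge\inte L(\hat\alpha,x)\hat m(dx)+\int_{(\T^d)^{N-1}}F(m^{N,1}_{\bf x})\prod_{j\neq1}\hat m(dx_j)-\ep=e^N-\ep$, by the definition of $e^N$. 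The two events together give the a.s. bound.

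The main obstacle is the $\{\theta=\infty\}$ case. Because $\{\theta=\infty\}$ is essentially a tail event, one cannot condition on it while keeping the martingale/closedness computations valid (the Brownian increments are no longer centered under the conditioning); this is precisely why I would run the entire argument pathwise and only integrate at the very end through Fatou's lemma. The second delicate point is the passage from the approximate closedness of $\sigma^1_t$ to an exactly closed limit $\sigma$ to which Lemma \ref{lemma2} and the energy bound \eqref{eq.characlambda0} genuinely apply, which relies on the tightness afforded by the boundedness of admissible strategies together with the weak lower semicontinuity of $\sigma\mapsto\int L\,d\sigma$ and of $\mu\mapsto\dk(\mu,\hat m)$ along the chosen subsequences.
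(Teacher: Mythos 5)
Your proposal is correct and follows essentially the same route as the paper's own proof: the same decomposition over $\{\theta=\infty\}$ and $\{\theta<\infty\}$, the same pathwise occupation-measure argument (tightness, closedness via It\^o and the a.s.\ vanishing of the martingale term, then Lemma \ref{lemma2} on $\{\theta=\infty\}$ and the characterization \eqref{eq.characlambda0} on $\{\theta<\infty\}$), the same use of the ergodic theorem for the undeviating players, and the same final combination via \eqref{cond.ineq} and Fatou's lemma.
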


\begin{proof}
Let   $(X^j)$ be the solution of the system 
\begin{align}
& d X^j_t= \beta^{N,T,\delta,i} ({\bf X}_t)dt +dB^j_t, \; X^j_0=x^j_0\qquad {\rm if }\; j\geq 2, \notag\\ 
& d X^1_t= \beta({\bf X}_t)dt +dB^1_t, \; X^1_0=x^1_0. \label{X^1}
\end{align}
We set $\theta:= \theta({\bf X})$ where ${\bf X}=(X^1,\dots, X^N)$ and chose again a probability space on which $X^1,\ldots,X^N$ and $\hat X^1,\ldots,\hat X^N$ are strong solutions of \eqref{X^1} and \eqref{hatX} respectively, and therefore  $X^j_{s\wedge\theta}=\hat X^j_{s\wedge\theta}$ a.s., for all $s\geq 0$ and $j\geq 2$.  So  
\begin{align}
 J^1(\beta, (\beta^{N,T,\delta,j})_{j\neq 1} ) & =  \limsup_{t\to+\infty}  \E\Bigl[ {\bf 1}_{\theta=+\infty} \frac{1}{t} \int_0^{t}  L(\beta({\bf X}_\cdot),X^1_s) + F(\pi\sharp m^{N,1}_{{\bf \hat X}_s})\ ds \notag \\
& \qquad  \qquad + 
{\bf 1}_{ \theta < \infty} \frac{1}{t}\int_{0}^t L( \beta({\bf X}_\cdot)_s, X^1_s)+ F(\pi\sharp m^{N,1}_{{\bf  X}_s})\ ds \Big] .\label{ahjlezrsn:fdk}
\end{align}
We evaluate successively all the terms in the right-hand side. For the first term, we claim that, a.s. in $\{\theta=+\infty\}$ and for $t$ large enough, we have
\begin{align}
& \frac{1}{t} \int_0^{t}  L(\beta({\bf X}_\cdot),X^1_s) + F(\pi\sharp m^{N,1}_{{\bf \hat X}_s})\ ds \geq e^N -\ep.
 \label{iaentrdBIS}
\end{align}
For the second term, we are going to prove that a.s. in $\{\theta<+\infty\}$ and for $t$ large enough, we have 
\be\label{iaentrd2}
\frac{1}{t}\int_{0}^t L( \beta({\bf X}_\cdot)_s, X^1_s) ds  \geq -\lambda_0-\ep/2
\ee
and 
\be\label{iaentrd3}
 \frac{1}{t} \int_{t\wedge \theta}^t  F(\pi\sharp m^{N,i}_{{\bf X}_s})\ ds \geq \int_{(\T^d)^{N-1}} F(m^{N,1}_{\bf x})  m^n(dx_2)\dots  m^n(dx_N) -\ep/2. 
\ee

{\bf Proof of \eqref{iaentrdBIS}.} By Lemma \ref{lemma2}, we can choose $\delta>0$ small enough (depending on $\ep$  but not on $T$) such that, for any closed Borel probability measure $\sigma$ on $\R^d\times \T^d$ with second marginal $\mu$
such that ${\bf d}_1(\mu, \hat m)\leq \delta$, we have 
\be\label{izlrnedglfc}
 \int_{\R^d\times \T^d} L( a, x) \sigma (da, dx) \geq \int_{\T^d} L(\hat \alpha(x), x) \hat m(dx) -\ep/3. 
\ee
Let us recall that $\sigma$ being closed means that  $\sigma$ satisfies \eqref{closedcond}. 

 We claim that, a.s. on the event $\{\theta=+\infty\}$ and for  any $t$ large enough, we have
\be\label{iaentrd}
\frac{1}{t} \int_0^{t}  L(\beta({\bf X}_\cdot),X^1_s) ds \geq \int_{\T^d} L(\hat \alpha(x), x) \hat m(dx) -2\ep/3.
\ee
To prove this, we fix  $(\phi_k)$  an enumerable and dense family of $C^2(\T^d)$. Let $\Omega_0$ be the set of $\omega\in \{\theta=+\infty\}$ such that, for any $k\in \N$,  
\be\label{def.Omega0}
\lim_{t\to +\infty}\frac{1}{t} \int_0^{t} D\phi_k(X^1_s)\cdot dB^1_s  =0. 
\ee
By Doob's inequality, $\Omega_0$ has a full probability in $\{\theta=+\infty\}$. Let us now argue by contradiction and assume that, for some $\omega\in \Omega_0$, there exists a sequence $t_n\to+\infty$ such that 
\be\label{iaentrdNO}
\frac{1}{t_n} \int_0^{t_n} L(\beta({\bf X}_\cdot(\omega)),X^1_s(\omega)) ds < \int_{\T^d} L(\hat \alpha(x), x) \hat m(dx) -2\ep/3.
\ee
Let $\sigma_n=\sigma_n(\omega)$ be the Borel probability measure defined on $\R^d\times \T^d$ by 
$$
\int_{\R^d\times \T^d} \phi(a,x) \sigma_n(da,dx)  = \frac{1}{t_n} \int_0^{t_n} \phi(\beta({\bf X}_\cdot(\omega)),X^1_s(\omega)) ds, \qquad \forall \phi \in C^0_b(\R^d\times \T^d).
$$
Let us denote by $\mu_n$ the second marginal of $\sigma_n$. Then, by the definition of $\theta$ and as $\theta=+\infty$, we have ${\bf d}_1(\mu_n, \hat \mu)\leq \delta$ as soon as $t_n\geq T$. Note that, by coercivity of $L$ and  \eqref{iaentrdNO}, the sequence $\sigma_n$ is tight. Hence there exists a Borel probability measure $\sigma$ on $\R^d\times \T^d$ and a subsequence, denoted in the same way, such $\sigma_n$ converges to $\sigma$. Let $\mu$ be the second marginal of $\sigma$. Then ${\bf d}_1(\mu, \hat m)\leq \delta$. Let us check that $\sigma$ is closed.  Indeed, we have, for any $k\in \N$,   
\begin{align*}
& \int_{\R^d\times \T^d} (\Delta \phi_k(x) + a\cdot D\phi_k(x)) \sigma_n(da,dx) = 
  \frac{1}{t_n}  \int_0^{t_n} (\Delta \phi_k(X^1_s) + \beta({\bf X}_\cdot) \cdot D\phi_k(X^1_s)) ds  \\
& \qquad = \frac{1}{t_n}\Bigl[ \phi_k(X^1_{t_n}) -\phi_k(x^1_0) -\sqrt{2} \int_0^{t_n} D\phi_k(X^1_s))\cdot dB^1_s  \Bigr]  
\end{align*}
which tends to $0$ a.s. as $n\to+\infty$ thanks to \eqref{def.Omega0}. So, for any $k\in \N$, 
\begin{align*}
& \int_{\R^d\times \T^d} (\Delta \phi_k(x) + a\cdot D\phi_k(x)) \sigma(da,dx,\omega) = 0.
\end{align*}
By the density of the ($\phi_k$), this proves that the measure $\sigma$ is closed. Letting $n\to+\infty$ in \eqref{iaentrdNO}, we also have, by our convexity assumption on $L$ in \eqref{HypL}, 
$$
\int_{\R^d\times \T^d}  L(a,x) \sigma(da,dx) \leq  \int_{\T^d} L(\hat \alpha(x), x) \hat m(dx) -2\ep/3. 
$$
(See, e.g., Corollary 3.2.3. in \cite{fathi2008weak}). This contradicts \eqref{izlrnedglfc}. So,  for any $\omega\in \Omega_0$ and for $t$ large enough, \eqref{iaentrd} holds.

As the measure on $(\T^d)^{N-1}$ defined by 
$$
(\pi,\dots, \pi)\sharp\left( \frac{1}{t} \int_{0}^t \delta_{(\hat X^2_s,\dots, \hat X^N_s)}ds\right)
 $$
 converges a.s. to the unique invariant measure on $(\T^d)^{N-1}$  associated with the drift $(x_2, \dots, x_N)\to (\hat \alpha(x_2), \dots, \hat \alpha(x_N))$, which is $ \hat\mu(dx_2)\otimes \dots \otimes   \hat \mu(dx_N)$, we have, a.s. and for $t$ large enough, 
\begin{align*}
& \frac{1}{t} \int_0^{t\wedge \theta}  L(\beta({\bf X}_\cdot),X^1_s) + F(\pi\sharp m^{N,1}_{{\bf \hat X}_s})\ ds \\
& \qquad \geq \int_{\T^d} L(\hat \alpha(x), x) \hat \mu(dx) +\int_{(\T^d)^{N-1}} F(m^{N,1}_{\bf x}) \hat \mu(dx_2)\dots \hat \mu(dx_N) -\ep = e^N -\ep.\notag
\end{align*}
This is \eqref{iaentrdBIS}. \\

{\bf Proof of \eqref{iaentrd2}.} We now turn to the estimate of the term $\{\theta<+\infty\}$ in the right-hand side of \eqref{ahjlezrsn:fdk} and first show that \eqref{iaentrd2} holds. For this, we argue as for the proof of \eqref{iaentrd}. We fix  $(\phi_k)$  an enumerable and dense family of $C^2(\T^d)$. Let $\Omega_0$ be the set of $\omega\in \{\theta<+\infty\}$ such that, for any $k\in \N$,  \eqref{def.Omega0} holds. We argue by contradiction, assuming that there exists $\omega \in \Omega_0$ and $t_n\to+\infty$ such that 
\be\label{iaentrdNO2}
\frac{1}{t_n}\int_{0}^{t_n} L( \beta({\bf X}_\cdot)_s, X^1_s)ds  < -\lambda_0-\ep/2.
\ee
Exactly as above, let us define the measure $\sigma_n=\sigma_n(\omega)$ as the Borel probability measure on $\R^d\times \T^d$ such that 
$$
\int_{\R^d\times \T^d} \phi(a,x) \sigma_n(da,dx)  = \frac{1}{t_n} \int_0^{t_n} \phi(\beta({\bf X}_\cdot),X^1_s) ds, \qquad \forall \phi \in C^0_b(\R^d\times \T^d).
$$
By the coercivity of $L$ and assumption \eqref{iaentrdNO2}, the sequence $\sigma_n$ is tight and we can find  $\sigma$ and a subsequence, denoted in the same way, such $\sigma_n$ converges to $\sigma$. We can check as above that $\sigma$ is closed. Letting $n\to+\infty$ in \eqref{iaentrdNO2}, we also have, by convexity of $L$ with respect to the first variable, 
$$
\int_{\R^d\times \T^d}  L(a,x) \sigma(da,dx) \leq -\lambda_0  -\ep/2. 
$$
This contradicts the characterization of $\lambda_0$ in \eqref{eq.characlambda0} and \eqref{iaentrd2} holds in $\Omega_0$.\\

{\bf Proof of \eqref{iaentrd3}.} Next we note that, on $\{\theta<+\infty\}$, we have,  for $j\neq i$,
$$
dX^j_s =  \alpha^n(X^j_s)ds +dB^j_s, \qquad {\rm for}\;  s\geq \theta.
$$
So, the measure on $(\T^d)^{N-1}$ defined by 
$$
(\pi,\dots , \pi) \sharp\left( \frac{1}{t} \int_{0}^t \delta_{(X^2_s,\dots, X^N_s)}ds\right)
 $$
 converges a.s. to the unique invariant measure associated with the drift
 $$
 (x_2, \dots, x_N)\to (\alpha^n(x_2), \dots,  \alpha^n(x_N)),
 $$ 
 which is $ m^n(dx_2)\otimes \dots \otimes  m^n(dx_N)$. So,  for $t$ large enough, \eqref{iaentrd3} holds. \\

{\bf Conclusion.} We now collect our estimates to evaluate the RHS of \eqref{ahjlezrsn:fdk}. As all the terms in the RHS of \eqref{ahjlezrsn:fdk} are bounded below, we have, by Fatou and by \eqref{iaentrdBIS}, \eqref{iaentrd2} and \eqref{iaentrd3}, 
\begin{align*}
 J^1(\beta, (\beta^{N,T,\delta,j})_{j\neq 1} ) & \geq \Bigl(e^N  -\ep\Bigr)\P\left[ \theta=+\infty\right]\\
& \qquad  +\Bigl(-\lambda_0+ \int_{(\T^d)^{N-1}} F(m^{N,1}_{\bf x})   m^n(dx_2)\dots   m^n(dx_N) -\ep\Bigr)\P\left[\theta<+\infty\right].
\end{align*}
By \eqref{cond.ineq}, this proves that $\ds
 J^1(\beta, (\beta^{N,T,\delta,j})_{j\neq 1} )  \geq e^N  -\ep .$
\end{proof}

\begin{proof}[Proof of Theorem \ref{thm:main}] In view of Lemma \ref{lem1} and Lemma \ref{lem2}, the strategies $(\beta^{N,T,\delta,j})$ satisfy the conditions in Definition \ref{def.Nash} with symmetric payoff $(e^N,\cdots, e^N)$. As, by \eqref{eNcve}, $e^N$ converges to $e$, this proves the theorem. 
\end{proof}

\end{document}